\numberwithin{equation}{section}
\newtheorem{thm}{Theorem}[section]
\newtheorem{lem}{Lemma}[section]
\newtheorem{prop}[lem]{Proposition}
\newtheorem{conj}[lem]{Conjecture}
\newcommand{\eps}{\varepsilon}
\newcommand{\rmod}{\!\!\!\!\pmod}
\title[Moments of $k$-free numbers]{Moments of the distribution of $k$-free numbers in short intervals and arithmetic progressions}
\author{Ramon M. Nunes}
\address{Universidade Federal do Ceará// Departamento de Matemática// Av. Humberto Monte, s/n// Campus do Pici - Bloco 914// CEP: 60.440-900// Fortaleza - CE - Brasil }
\email{ramon@ufc.br}
\date{\today}
\begin{document}

\maketitle

\begin{abstract}
We show estimates for the distribution of $k$-free numbers in short intervals and arithmetic progressions. We argue that, at least in certain ranges, these estimates agree with a conjecture by H. L.  Montgomery.  
\end{abstract}

%\tableofcontents
\section{Introduction}
Let $k\ge 2$. We say a positive integer $n$ is $k$-free if it is not divisible by any $k$-th power of an integer $>1$. The set of $k$-free numbers is known to have a natural density. Indeed, denoting by $\mu_k$ the characteristic function of $k$-free numbers, one has the asymptotic formula    
\begin{equation}\label{natural-density}
\sum_{n\leq X}\mu_k(n)=\zeta(k)^{-1}X+O(X^{1/k}),
\end{equation}
where $\zeta$ is the Riemann zeta function. Even though this is an elementary result, we mention that improving on the exponent $\frac{1}{k}$ is known to to have non-trivial implications towards the Riemann hypothesis. Conversely, under the Riemann hypothesis, Montgomery and Vaughan \cite{MV-kfree} have shown that the error term is $O(X^{\frac{1}{k+1}})$. This has been slightly improved over the years but not significantly so.

\subsection{Short Intervals}
If we switch to the question of $k$-free numbers lying in short intervals, much less is known. This was addressed by Hall in \cite{Hall1} and \cite{Hall2} only in the case $k=2$, but most of his results generalize straightforwardly to larger $k$. 

In what follows, we will define several quantities that depend upon an integer parameter $k\ge 2$ which, for convenience, we omit from the notation. We hope no confusion comes from this. We let for positive integers $k\ge 2$ and $n,H\ge 1$,   
$$
N(n,H):=\sum_{0\leq h<H }\mu_k(n+h),
$$
In view of \eqref{natural-density}, it is natural to expect that
$$
N(n,H)\sim \zeta(k)^{-1}H
$$
as $n$ and $H$ tend to infinity, with large uniformity. This leads to considering the discrepancy
$$
D(n,H):=N(n,H)-\zeta(k)^{-1}H.
$$

In trying to estimate the size of the set of $n$ for which $D(n,H)$ is large, Hall considered the moments
\begin{equation}\label{tildeM}
\mathfrak{M}(x,H,\ell):=\sum_{0<n\leq x} D(n,H)^{\ell},\;\;(\ell\geq 1).
\end{equation}
One obviously has the trivial bound
\begin{equation}\label{trivial-moment}
\mathfrak{M}(x,H,\ell)\ll_{k,\ell} xH^{\ell}
\end{equation}
where here and throughout the text, unless otherwise stated, the implied constant by the symbols $O$ and $\ll$ depend at most on the subscribed variables (e.g. the implied constant by the symbol $\ll_{\eps,k,\ell}$ depends at most on $\eps$, $k$ and $\ell$).

The bound \eqref{trivial-moment} should be far rom the truth as one expects that $D(n,H)$ tipically exhibits a lot of cancellation.
After some tedious but quite straightforward bookkeeping, the main results from \cite{Hall1} and \cite{Hall2}, when extrapolated to general $k$ show that there exist $\theta = \theta_{k,\ell}>0$ such that for $H\le x^{\theta}$, one has
$$
\mathfrak{M}(x,H,2)\sim CxH^{1/k}, 	
$$
for some $C=C_k>0$.
%
%where
%\begin{equation}\label{const}
%C=\dfrac{\zeta\left(\frac{3}{2}\right)}{\pi\zeta(2)}\prod_{p}\bigg(\frac{p^3-3p+2}{p^3}\bigg)=0,167\ldots,
%\end{equation}
and, for each $\ell\geq 3$,
\begin{equation}\label{bound-Hall}
  \mathfrak{M}(x,H,\ell)\ll_{k,\ell} xH^{\left(\frac{\ell}{2}-\frac{k-1}{k}\right)}.
\end{equation}
When $k= 2$, a rather modest improvement was obtained in the author's thesis \cite{tese}. We now significantly improve on those bounds
\begin{thm}\label{thm-M3}
	Let $\mathfrak{M}(x,H,\ell)$ be defined as in \eqref{tildeM}. For every $\eps>0$, $k\ge 2$ and $\ell\geq 3$, we have
\begin{equation}\label{thm-M3-eq}
\mathfrak{M}(x,H,\ell)\ll _{\eps,k,\ell} x^{\eps} \left(H^{\frac{\ell}{2k}}x+H^{\ell}x^{\frac{2}{k+1}}\right),
\end{equation}
In particular, for $c_k=\frac{2k(k-1)}{(k+1)(2k-1)}$ and $1\leq H\leq x^{\frac{c_k}{\ell}}$, we have 
$$
\mathfrak{M}(x,H,\ell)\ll_{\eps,k,\ell} x^{1+\eps}H^{\frac{\ell}{2k}}.
$$
\end{thm}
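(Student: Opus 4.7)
The plan is to begin from the Möbius-inversion identity $\mu_k(m) = \sum_{d^k \mid m} \mu(d)$, which after a short computation yields
$$
D(n,H) = -\sum_{d\ge 1} \mu(d)\bigl(\{(n+H-1)/d^k\} - \{(n-1)/d^k\}\bigr) + O(1),
$$
the sum being effectively truncated at $d \le (n+H)^{1/k}$. I would introduce a parameter $y = y(x,H)$ to be optimized at the end, and decompose $D(n,H) = D^-_y(n) + D^+_y(n)$ according to whether $d \le y$ or $d > y$. The elementary inequality $|a+b|^\ell \ll_\ell |a|^\ell + |b|^\ell$ then reduces the task to bounding the $\ell$-th moment of each piece separately.

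For the small-$d$ piece $D^-_y$, I would expand the $\ell$-th power, interchange summations, and apply the Fourier expansion $\{t\} - \tfrac12 = -\tfrac{1}{\pi}\sum_{h\ge 1}\sin(2\pi h t)/h$. The sum over $n \le x$ then produces an orthogonality-type constraint on the frequencies $h_i/d_i^k$: the diagonal terms (those for which $\sum \pm h_i / d_i^k \in \mathbb{Z}$) should contribute $x^{1+\eps} H^{\ell/(2k)}$ once the $\lcm$-structure of the $d_i$'s is accounted for, while off-diagonal terms are controlled using standard divisor bounds such as $\sum_{d \le z} \tau(d)^C \ll z(\log z)^{O(1)}$ together with exponential-sum estimates for the resulting short sums in $n$.

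For the large-$d$ piece $D^+_y$, each summand has magnitude $O(H/d^k + 1)$, and $D^+_y(n)$ is supported on those $n$ for which some $m \in [n, n+H)$ possesses a $k$-th power divisor exceeding $y^k$. Combining the trivial pointwise bound $|D^+_y(n)| \ll H$ with a Montgomery--Vaughan-style unconditional estimate for the number of such exceptional $n$ in $[1,x]$ -- which uses the analytic information on $\zeta(s)/\zeta(ks)$ and produces the exponent $2/(k+1)$ -- should yield the second term $H^\ell x^{2/(k+1)}$ of \eqref{thm-M3-eq}. Balancing the two contributions by choice of $y$ then gives the theorem, and the corollary for $H \le x^{c_k/\ell}$ follows by direct comparison of the two terms (the first dominates precisely when $H^{\ell(2k-1)/(2k)} \le x^{(k-1)/(k+1)}$).

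The main obstacle I anticipate is the off-diagonal analysis in the small-$d$ piece: one must show that near-resonances among distinct $d_1, \ldots, d_\ell$ -- those tuples where $\sum \pm h_i / d_i^k$ fails to vanish but only by a quantity of size $\ll 1/(xH)$ -- do not conspire to contribute more than the expected diagonal main term. This calls for a delicate dyadic decomposition in the $d_i$'s and in the heights $h_i$, together with a uniform handling of $\ell$-fold $\gcd$ sums, which becomes increasingly intricate as $\ell$ grows and is what distinguishes this analysis from the second-moment case previously handled by Hall.
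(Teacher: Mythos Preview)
Your route differs substantially from the paper's. The paper does not split $D(n,H)$ at a threshold $y$; instead it expands $D(n,H)^\ell$ binomially and reduces to correlation sums $\sum_{n\le x}\mu_k(n+h_1)\cdots\mu_k(n+h_j)$. Proposition~\ref{alatsang} (a uniform version of Mirsky's asymptotic for $k$-free tuples) evaluates each such sum as $A(\mathbf{h})x+O(x^{2/(k+1)+\eps})$; summing the errors over $\mathbf{h}\in[0,H)^j$ produces the term $H^\ell x^{2/(k+1)}$ directly, with no parameter to optimize. The main term is $C_\ell(H)x$, and the whole weight of the argument lies in Theorem~\ref{frakC3-me-Hq}: $C_\ell(H)\ll H^{\ell/(2k)+\eps}$. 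That bound is obtained by writing $C_\ell(H)$ as a sum over $\ell$-tuples $\mathbf{r}$ of integers $\ge 2$ weighted by an exponential-sum quantity $Z_\ell(H;\mathbf{r})$ (Lemma~\ref{C=sumE}), and then combining two estimates for $Z_\ell$ (Lemma~\ref{bound-for-Z}): Hall's bound $(r_1\cdots r_\ell)^{k+\eps}H^{\ell/2}/[\mathbf{r}]^k$, effective for large $[\mathbf{r}]$, and a new elementary bound $(r_1\cdots r_\ell)^{k+\eps}$, effective for small $[\mathbf{r}]$. Switching between them at $[\mathbf{r}]\asymp H^{\ell/(2k)}$ is what yields the exponent $\ell/(2k)$.

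Against this, your proposal has two genuine gaps. First, the off-diagonal analysis you flag as the main obstacle is \emph{avoided entirely} by the paper's decomposition: once the $n$-sum is replaced by the correlation-sum asymptotic, only the resonance condition $\rho_1+\cdots+\rho_\ell\in\mathbb{Z}$ survives (it is built into the singular series via Lemma~\ref{Aqh}), and whatever would have been off-diagonal is absorbed into the $O(x^{2/(k+1)+\eps})$ error of Proposition~\ref{alatsang}. You are proposing to fight a battle the paper's structure makes unnecessary. Second, and more seriously, you assert without argument that your diagonal contributes $x^{1+\eps}H^{\ell/(2k)}$. This is precisely the content of Theorem~\ref{frakC3-me-Hq} and is the non-trivial input to the whole theorem; ``once the $\lcm$-structure of the $d_i$'s is accounted for'' hides exactly the point where the new idea (the first estimate of Lemma~\ref{bound-for-Z}, complementing Hall's in the small-$[\mathbf{r}]$ range) is needed. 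As a minor point, your attribution of the exponent $2/(k+1)$ to analytic information on $\zeta(s)/\zeta(ks)$ is off: in the paper it arises from the purely elementary induction of Lemma~\ref{mirsky-lemma}, and in your own framework it would come simply from $\sum_{d>y}x/d^k\ll x/y^{k-1}$ with $y=x^{1/(k+1)}$.
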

\subsection{Arithmetic Progressions}

The distribution of arithmetic sequences in short intervals shares many similarities with the distribution of these sequences in arithmetic progressions. In the following we consider $k$-free numbers in arithmetic progressions, trying to emphasize as much as possible this analogy in the current case.

Let $X>1$ and let $q$ be a positive integer. It is easy to prove that the number of $k$-free numbers $\leq X$ which are relative prime to $q$ is asymptotically equivalent to $A_qX$, where
\begin{equation}
	A_q:=\frac{\varphi(q)}{q}\prod_{p\,\nmid\, q}\left( 1-\frac{1}{p^k} \right).
\end{equation}
The question of whether $k$-free numbers are well distributed among the arithmetic progressions modulo $q$ amounts to study the following error terms: Let $a$ be an integer such that $(a,q)=1$ and let $E(X,q,a)$ be given by the following equation
\begin{equation}\label{E}
	\sum_{\substack{n\leq X\\ n\equiv a\!\!\!\!\!\pmod q}}\mu_k(n)=\frac{A_q}{\varphi(q)}X + E(X,q,a).
\end{equation}
It is clear that $E(X,q,a)$ satisfies $E(X,q,a)\ll \frac Xq$ and we are interested in finding regions in the range $X$ and $q$ in which 
\begin{equation}\label{petit-o}
E(X,q,a)=o\left(X/q\right). 
\end{equation}
For $k=2$, it was shown in \cite{RMN23} that for every $\eps>0$, \eqref{petit-o} holds true as $X\rightarrow \infty$, for arbitrary $q\leq X^{\frac{25}{36}-\eps}$ and $(a,q)=1$, thus breaking the barrier of $X^{\frac23-\eps}$ that stood since the work of Prachar \cite{Prachar}. To the best of our knowledge, for $k\ge 3$, the best estimate to date is still the one obtained by completely elementary methods in \cite{Prachar} and which gives \eqref{petit-o} for $q\le X^{\frac{k}{k+1}-\eps}$. 
It is believed that \eqref{petit-o} holds as $X\rightarrow\infty$ for arbitrary $q\leq X^{1-\eps}$ and $(a,q)=1$. In fact, a probabilistic model suggests that something even stronger should hold: 

\begin{equation}\label{MontConj}
	E(X,q,a)=O_{\eps}\left(X^{\eps}(X/q)^{\frac{1}{2k}}\right)\text{, }\eps>0\text{ arbitrary}
\end{equation}
uniformly for $(a,q)=1$. When $k=2$, this conjecture was put forward by Le Boudec in \cite{LeBoudec} , and is just a slight modification of an older conjecture by Montgomery (see \cite[top of the page 145]{Croft}). We refer the interested reader to the discussion leading to \cite[Conjecture 1]{LeBoudec} for an explanation on why the previous conjecture might fail for large values of $q$.

We define the $\ell$-th moment  for this distribution as
\begin{equation}\label{M-Hq}
\mathcal{M}(X,q,\ell)=\sideset{}{^{\ast}}\sum_{a\!\!\!\!\pmod q}\!\!\!E(X,q,a)^{\ell},
\end{equation}
where the $*$ symbol means that we only sum over the classes that are relatively prime to $q$.

In view of \eqref{MontConj}, we suspect that the following should hold

\begin{conj}\label{Laconj}
	For integers $k\ge 2$ and $\ell\geq 1$, there exist $\delta = \delta_{k,\ell}>0$, $\eta = \eta_{k,\ell}>0$ and a positive multiplicative function $c = c_{k, \ell}$ such that
\begin{equation}
  \left|\mathcal{M}(X,q,\ell)-c(q) \varphi(q)\left(\frac Xq\right)^{\frac{\ell}{2k}}\right|\leq \varphi(q)\left(\frac Xq\right)^{\frac{\ell}{2k}-\eta},
\end{equation}
for every $q$ and $X$ satisfying 
$$
X^{1-\delta}\leq q\leq X^{1-\eps}\text{, }X\geq 2.
$$
Furthermore
\begin{itemize}
  \item if $\ell$ is even, $c(q)$ is bounded above and below by positive constants depending only on $k$ and $\ell$;
\item if $\ell$ is odd, $c\equiv 0$.
\end{itemize}
\end{conj}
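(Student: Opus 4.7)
My plan is to reduce to additive character sums modulo $q$ via Fourier analysis, and then to understand the resulting multi-dimensional exponential sums through their diagonal contribution. Using $\mu_k(n)=\sum_{d^k\mid n}\mu(d)$ together with the standard handling of the progression condition, one may write, for $(a,q)=1$,
$$
E(X,q,a)=-\sum_{\substack{d\le X^{1/k}\\ (d,q)=1}}\mu(d)\,\psi\!\left(\frac{X/d^k-\overline{d^k}\,a}{q}\right)+O(X^{1/k}),
$$
where $\overline{d^k}$ is the inverse of $d^k$ modulo $q$ and $\psi$ is the standard sawtooth. In practice I would use a smooth partition of unity in $d$ so that the "signal" range $d\asymp (X/q)^{1/k}$ is isolated from trivially bounded tails, and then expand $\psi$ into its truncated Fourier series so that $E(X,q,a)$ is put in the shape of a sum of additive characters $e(h\overline{d^k}\,a/q)$ with coefficients of size $\ll 1/h$.

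Next I would raise to the $\ell$-th power and sum over reduced residues $a$ modulo $q$. Orthogonality on $(\mathbb{Z}/q\mathbb{Z})^\ast$ replaces the innermost sum by a Ramanujan sum that, up to lower-order contributions, detects the congruence $\sum_{i=1}^{\ell} h_i\,\overline{d_i^k}\equiv 0\pmod{q}$. The generic solutions correspond to perfect matchings of $\{1,\dots,\ell\}$, with pairs $\{i,j\}$ constrained by $d_i=d_j$ and $h_i=-h_j$; each pair contributes a factor of order $(X/q)^{1/k}$ via a second-moment computation of Hall / Montgomery--Vaughan type, and summing over the $(\ell-1)!!$ matchings produces the conjectured main term $c(q)\varphi(q)(X/q)^{\ell/(2k)}$, with $c(q)$ emerging as an Euler product over local densities attached to $q$. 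When $\ell$ is odd, no such matching exists, so the diagonal vanishes and $c\equiv 0$, consistently with the conjecture.

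The main obstacle is the off-diagonal contribution, namely those $(d_i,h_i)$-tuples satisfying the congruence but not arising from a matching. These reduce to mean values of $\ell$-fold products of incomplete Kloosterman sums modulo $q$, and to extract the required saving $(X/q)^{-\eta}$ uniformly in the range $X^{1-\delta}\le q\le X^{1-\eps}$ seems to demand Deligne-type square-root cancellation combined with an $\ell$-dimensional large sieve. Already at $\ell=3$ this is a shifted-convolution problem modulo $q$ at the edge of current technology, and for $\ell\ge 4$ it is essentially open, which is precisely why the statement is formulated as a conjecture. In practice I would first attempt the case $\ell=2$ unconditionally, leveraging the analytic structure of $L(s,\chi)/L(ks,\chi^k)$ along the lines of \cite{RMN23}, and then try to transport the short-interval bounds of Theorem~\ref{thm-M3} through the duality between short intervals and arithmetic progressions to obtain non-trivial upper bounds for $\mathcal{M}(X,q,\ell)$ when $\ell$ is larger.
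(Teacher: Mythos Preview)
The statement is a \emph{conjecture}; the paper does not prove it, and indeed remarks that only the case $k=\ell=2$ is known (via \cite{RMN}, \cite{LeBoudec}, \cite{GMRR}). So there is no ``paper's own proof'' to compare against. You seem to be aware of this, since you explicitly say that the off-diagonal contribution ``is essentially open'' and that this is ``precisely why the statement is formulated as a conjecture.'' In that sense your write-up is not a proof proposal but a heuristic outline with an acknowledged gap at the crucial step; that is an honest assessment, but it should not be labelled a proof.

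It is still worth noting that your route is quite different from the one the paper uses for the unconditional upper bounds in Theorems~\ref{thm-M3} and~\ref{thm-M3-Hq}. You expand $E(X,q,a)$ via the sawtooth/Fourier series and reduce to incomplete Kloosterman-type sums modulo $q$, hoping to isolate a diagonal from perfect matchings. The paper instead expands the $\ell$-th power directly, applies the tuple formula for $k$-free numbers (Proposition~\ref{alatsang}) to reach the singular-series object $C_{\ell}(X/q;q)$ of \eqref{def-frakC-Hq}, and then bounds $C_{\ell}$ by the combination of Hall's fundamental-lemma estimate and an elementary pointwise bound (Lemma~\ref{bound-for-Z}). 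That machinery yields the sharp-up-to-$X^{\eps}$ upper bound \eqref{thm-M3-eq-Hq} without ever touching Kloosterman sums, but, as the paper says after Theorem~\ref{frakC3-me-Hq}, it does not seem to give an asymptotic for even $\ell$ nor extra cancellation for odd $\ell$. Your Kloosterman framework is in principle better suited to extracting a main term, but the obstruction you name---controlling the off-diagonal $\ell$-fold congruence solutions with a power saving uniformly in $q$---is genuine and currently out of reach for $\ell\ge 3$, so the proposal remains a heuristic rather than a proof.
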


When $k=\ell=2$, this was first shown in \cite{RMN} and the best admissible value of $\delta_{2,2}$ was subsequently improved in \cite{LeBoudec} and more recently in \cite{GMRR}. We also mention the results by Croft \cite{Croft} ($k=2$) and Vaughan \cite{Vaughan} where an extra sum is performed on the moduli $q$. It is quite remarkable that the result of \cite{GMRR} when summed over $q$ are still stronger than the one in \cite{Vaughan}.

Finally, very little was known for higher moments. As it transpires from our argument, the method of Hall could be easily imported to any $k\geq 2$. This would lead to the bounds
\begin{equation*}
	\mathcal{M}(X,q,\ell)\ll X^{\eps}\varphi(q)\left(\frac{X}{q}\right)^{\frac{\ell}{2}-\frac{k-1}{k}},
\end{equation*} 
for $q>X^{1-\delta}$ for certain $\delta=\delta_{k,\ell}$. As was the case for short intervals, slightly improved bounds were shown in \cite[Chapter 4]{tese}.

%i
By an essentially identical proof to that of Theorem \ref{thm-M3}, we have
\begin{thm}\label{thm-M3-Hq}
Let $\mathcal{M}(X,q,\ell)$ be defined by \eqref{M-Hq}. Then for every $\eps>0$, every $k\ge 2$ and every $\ell\geq 3$, we have
\begin{equation}\label{thm-M3-eq-Hq}
  \mathcal{M}(X,q,\ell)\ll_{\eps,k,\ell} X^{\eps}\left( q\left(\frac{X}{q}\right)^{\ell/2k}+X^{\frac{2}{k+1}}\left(\frac{X}{q}\right)^{\ell-1}\right),
\end{equation}
In particular, for $c_k=\frac{2k(k-1)}{(k+1)(2k-1)}$, for every $\eps>0$, for every $\ell\geq 3$, for every $X\geq 1$ and every positive integer $q$ such that $X^{1-\frac{c_k}{\ell}}\leq q\leq X$, we have
$$
\mathcal{M}(X,q,\ell)\ll_{\eps,k,\ell}\varphi(q)\left(\frac{X}{q}\right)^{\frac{\ell}{2k}+\eps}.
$$
\end{thm}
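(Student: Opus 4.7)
Theorem \ref{thm-M3-Hq} is the arithmetic-progression analogue of Theorem \ref{thm-M3}, and the plan is to run the same argument under the dictionary $H \leftrightarrow X/q$ (the expected cardinality per residue class), with $x \leftrightarrow q$ in the outer-sum contribution but $x \leftrightarrow X$ in the ``ambient'' error that comes from counting $k$-free numbers in $[1,X]$. Under this dictionary the two terms of \eqref{thm-M3-eq} map precisely onto the two terms of \eqref{thm-M3-eq-Hq}, and the threshold $c_k/\ell$ in the ``in particular'' clause comes out the same way by balancing them.

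The first step is to open the $k$-free indicator via $\mu_k(n) = \sum_{d^k\mid n}\mu(d)$. Since $(n,q)=1$ forces $(d,q)=1$,
$$E(X,q,a) \;=\; \sum_{(d,q)=1}\mu(d)\,\Delta_d(X,q,a), \qquad \Delta_d(X,q,a) \;:=\; \#\{n\le X:\,n\equiv a\bmod q,\ d^k\mid n\} - \frac{X}{d^k q},$$
with $\Delta_d = O(1)$ and negligible once $d^k$ exceeds $X$. Raising to the $\ell$-th power and swapping summations,
$$\mathcal{M}(X,q,\ell) \;=\; \sum_{d_1,\dots,d_\ell}\Bigl(\prod_i \mu(d_i)\Bigr)\sideset{}{^{\ast}}\sum_{a\bmod q}\prod_i \Delta_{d_i}(X,q,a),$$
and the inner sum over $a$ is treated by Fourier/Poisson expansion modulo $qL^k$ with $L=\lcm(d_1,\dots,d_\ell)$, in complete parallel with the Poisson modulo $L^k$ that drives the short-interval proof.

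The $\mathbf d$-sum is then split into a ``small'' range, where every $d_i$ is at most $(X/q)^{1/k}$ up to a factor $X^\eps$, and a ``large'' complement. The small range, after sorting out the GCD structure of the $d_i$ in a manner compatible with $(d_i,q)=1$, produces the diagonal main contribution of size $q(X/q)^{\ell/(2k)}$, in exact parallel with the $xH^{\ell/(2k)}$ contribution in Theorem \ref{thm-M3}. In the large range one bounds $\ell-1$ of the factors $\Delta_{d_i}$ trivially by $O(X/q)$, retains the Möbius cancellation on the remaining variable, and invokes \eqref{natural-density} on the ambient interval $[1,X]$ to extract the factor $X^{2/(k+1)}$, giving $X^{2/(k+1)}(X/q)^{\ell-1}$. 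The ``in particular'' clause then follows by balancing the two terms over the stated range of $q$, using $\varphi(q)\asymp q$ up to $X^\eps$.

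The main obstacle I anticipate is carrying the condition $(d_i,q)=1$ cleanly through the GCD-reduction in the small range, so that the local character sums over $a\bmod q$ assemble into a genuine multiplicative main term consistent with Conjecture \ref{Laconj}. The improvement from Hall's exponent $\ell/2-(k-1)/k$ to the conjectural $\ell/(2k)$ — identical in both theorems — comes from exploiting all $\ell$ Möbius variables jointly rather than pairing them, and transporting this joint step from short intervals to the modulus-$q$ setting is the step that will occupy most of the actual argument.
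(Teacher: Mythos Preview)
Your route is genuinely different from the paper's. The paper does \emph{not} open $\mu_k$ via $\sum_{d^k\mid n}\mu(d)$ at the outset. Instead it expands $E(X,q,a)^{\ell}$ binomially, sums over $a$, and reduces to correlation sums $\sum_{(n,q)=1}\mu_k(n+h_1q)\cdots\mu_k(n+h_jq)$; Proposition~\ref{alatsang} (a Mirsky--Tsang asymptotic for $k$-free tuples with a coprimality condition) turns each such sum into $A_q(\mathbf{h})\,|I|+O(X^{2/(k+1)+\eps})$. The main terms assemble, via Lemma~\ref{lastlemma}, into $q\,C_{\ell}(X/q;q)$, and the whole weight of the argument is Theorem~\ref{frakC3-me-Hq}: the bound $C_{\ell}(H;q)\ll H^{\ell/(2k)+\eps}$. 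That bound, in turn, rests on Lemma~\ref{bound-for-Z}, whose \emph{first} (elementary) inequality $Z_{\ell}(H;\mathbf{r})\ll(r_1\cdots r_{\ell})^{k+\eps}$ is the new ingredient over Hall; Hall's Montgomery--Vaughan bound alone only yields the exponent $\ell/2-(k-1)/k$.

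Against that benchmark, your plan has two concrete gaps. First, the exponent $2/(k+1)$ does \emph{not} come from \eqref{natural-density}, which only gives $X^{1/k}$; in the paper it comes from the tuple-counting Lemma~\ref{mirsky-lemma} (an induction/hyperbola argument over $j$ shifts), fed into Proposition~\ref{alatsang}. Your large-range step is also internally inconsistent: you correctly note $\Delta_d=O(1)$, then propose to bound $\ell-1$ of the $\Delta_{d_i}$ ``trivially by $O(X/q)$'', and ``retain the M\"obius cancellation on the remaining variable'' --- but a single $\sum_d\mu(d)\Delta_d$ has no obvious cancellation beyond $O(1)$ per term, and summing $O(1)$ over $d\le X^{1/k}$ gives $X^{1/k}$, not $X^{2/(k+1)}$. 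Second, and more seriously, your small-range step merely asserts the outcome $q(X/q)^{\ell/(2k)}$ ``in exact parallel'' without naming any mechanism. In the paper's language your $d_i$ play the role of the $r_i$ in Lemma~\ref{C=sumE}, and to beat Hall you must supply an analogue of the elementary half of Lemma~\ref{bound-for-Z}; nothing in your Poisson-and-split description indicates what that analogue is. Without it, your small range will reproduce Hall's $(X/q)^{\ell/2-(k-1)/k}$, not $(X/q)^{\ell/(2k)}$.
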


Notice that according to Conjecture \ref{Laconj}, at least for even $\ell$, this estimate is sharp except for the term $X^{\eps}$. 

%Finally, we mention in passing that by using H\"older's inequality as in Corollary \ref{corFouvry} we may also deduce estimates for the absolute moments, given by
%$$
%\mathcal{M}^{+}_{\lambda}(X,q):=\sideset{}{^{\ast}}\sum_{a\!\!\!\!\pmod q}\!\!\!\left|E(X,q,a)\right|^{\lambda}.
%$$

\subsection{Sums of singular series}

%\
	Expanding  $D(n,H)^{\ell}$ and using asymptotic formulas for tuples of $k$-free numbers (\textit{e.g.} as in \cite{Mirsky2} or \cite{Tsang}), Hall deduced, for fixed integers $k,\ell \geq 1$, the asymptotic formula
\begin{equation}\label{Ml-Cl}
\mathfrak{M}(x,H,\ell)\sim C_{\ell}(H)x,\;\;\;\;\;\text{as }x\rightarrow \infty,
\end{equation}
where $C_{\ell}(H)$ is a combination of singular series terms (\textit{cf.} \cite[Eq. (1)]{Hall2}). A trivial bound in this case is
\begin{equation*}
C_{\ell}(H)\ll_{k,\ell} H^{\ell}
\end{equation*}
and in fact, most of the work in \cite{Hall2} is done in order to show estimates for $C_{\ell}(H)$. We take a similar route and thus, the source of our improvement comes from a better estimation of $C_{\ell}(H)$.  

The case of arithmetic progressions can be treated in a strikingly similar way. Expanding $E(X,q,a)^{\ell}$ and summing over $a\pmod{q}$, we are led to sums which are roughly like

$$
	\mathcal{S}_q(X,\mathbf{h}) \sum_{\substack{n\le X\\ (n,q)=1}}\mu_k(n+h_1q)\ldots \mu_k(n+h_jq), 
$$
for some tuple of integers $\mathbf{h}=(h_1,\ldots,h_j)$. We now need a mild generalization of the results from \cite{Mirsky2} that allows for the introduction of the condition $(n,q)=1$. This is furnished by Proposition \ref{alatsang} below. This shows that
$$
S_q(X,\mathbf{h}) \sim A_q(\mathbf{h})X,
$$
where $A_q(\mathbf{h})$ is the singular series term given by
\begin{equation}\label{Aq(h)}
	A_q\left(\mathbf{h}\right):=\frac{\varphi(q)}{q}\prod_{p\,\nmid\, q}\left(1-\frac{u_p(\mathbf{h})}{p^2}\right),
\end{equation}
and
\begin{equation}\label{def-u}
u_p(\mathbf{h})=\#\big\{h_1,\ldots,h_{\ell}\!\!\!\pmod{p^k}\big\},
\end{equation}

Applying this formula for several values of $\mathbf{h}$ along with the binomial theorem, we can show that the size of $\mathcal{M}(X,q,\ell)$ is controlled by $C_{\ell}(X/q,q)$, where for for positive integers $k\ge 2$, $\ell$ and $q$, and for $H\geq 1$, $C_{\ell}(H,q)$ is given by
\begin{equation}\label{def-frakC-Hq}
	C_{\ell}(H;q):=\sum_{j=0}^{\ell}\binom{\ell}{j}\left(-A_qH\right)^{\ell-j}B_{j}(H;q),
\end{equation}
with $B_{0}(H;q):=1$, and
\begin{equation}\label{Bj-Hq}
	B_{j}(H;q):=\int_{H-1}^{H}\sum_{0\le h_1< u}\ldots\sum_{0\le h_j< u}A_q(\mathbf{h})du,\text{ for }j\geq 1.
\end{equation}
The precise relationship between $\mathcal{M}(X,q,\ell)$ and $C_{\ell}(X,q)$ is given by formula \eqref{last-M3-Hq}. This is an effective analogue of \eqref{Ml-Cl} in the case of arithmetic progressions.\\
By letting $q=1$ and $H$ be an integer, this definition reduces to the one in \cite{Hall2} or, equivalently, the one in \ref{my-C}. In other words, we have
\begin{equation}\label{C,1=C}
C_{\ell}(H;1)=C_{\ell}(H)\text{, for every positive integer }k.
\end{equation}

It is thanks to identity \eqref{C,1=C} and the similarity of the structure of \eqref{Ml-Cl} and \eqref{last-M3-Hq} we are allowed to treat both short intervals and arithmetic progressions simultaneously. Indeed the following result will be a key ingredient in the proof of both Theorem \ref{thm-M3} and \ref{thm-M3-Hq}

\begin{thm}\label{frakC3-me-Hq}
For every $\eps>0$ and $k,\ell \ge 2$, we have the inequality
\begin{equation}\label{frakC3-eq-Hq}
  C_{\ell}(H;q)\ll_{\epsilon,k,\ell} H^{\frac{\ell}{2k}+\eps},
\end{equation}
\end{thm}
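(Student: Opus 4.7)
The trivial bound $C_\ell(H;q)\ll H^\ell$ is obtained by estimating each summand separately, so the proof must extract the cancellations hidden in the alternating binomial coefficients $\binom{\ell}{j}(-A_qH)^{\ell-j}$, which together with the multiplicative structure of $A_q(\mathbf{h})$ should produce a bound matching the conjectural size $H^{\ell/(2k)}$ of the $\ell$-th moment of fluctuations.

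My first step is to exploit the identity
$$A_q(\mathbf{h}) = A_q \prod_{p \nmid q}\left(1 - \frac{u_p(\mathbf{h}) - 1}{p^k - 1}\right),$$
which factors out $A_q$ and makes the corrections vanish on tuples with no collisions modulo $p^k$. Expanding the product over primes as an alternating sum over finite sets $S$ of primes coprime to $q$ gives
$$A_q(\mathbf{h}) = A_q \sum_S (-1)^{|S|}\prod_{p \in S}\frac{u_p(\mathbf{h}) - 1}{p^k - 1},$$
and each term is supported on tuples $\mathbf{h}$ whose components collide modulo $p^k$ for every $p \in S$.

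Substituting this into the definition of $B_j(H;q)$ and interchanging the orders of summation in $C_\ell(H;q)$, the $S = \emptyset$ contribution telescopes via the binomial identity
$$\sum_{j=0}^\ell \binom{\ell}{j}(-A_q H)^{\ell - j}(A_q u)^j = A_q^\ell (u-H)^\ell,$$
which integrates to $O(1)$ over $u \in [H-1, H]$. For each non-empty $S$, I would group the contributing tuples by the set partition $\Pi$ of the index set $[j]$ that encodes their collision pattern modulo $m := \prod_{p \in S} p^k$. For a partition with $r$ blocks, the count of compatible $\ell$-tuples in $[0,H)^\ell$ is $O\bigl(H^r(H/m)^{\ell-r}\bigr)$ when $m \le H$, and the Euler-factor weight contributed by $\prod_{p\in S}(u_p(\mathbf{h})-1)/(p^k-1)$ is $\ll_\eps m^{-1+\eps}$. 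Summing over $m$ and over $\Pi$, and using the residual cancellations from the remaining partial $j$-sums (which themselves are further binomial sums now truncated by the collision constraints), the dominant configurations are those with roughly $\ell/2$ two-element blocks, each responsible for a factor $H^{1/k}$ from the collision constraint, giving a total of $H^{\ell/(2k)+\eps}$.

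The main obstacle is the sign bookkeeping: one must match the signs coming from the expansion of the Euler product against the signs in the binomial coefficients carefully enough to reduce the trivial $O(H^\ell)$ estimate to the claimed $O(H^{\ell/(2k)+\eps})$. A further delicate point is uniformity in $q$, which I would handle by absorbing the Euler factors at primes dividing $q$ into the $A_q$ factor from the outset, and by treating the regime $m > H$ (where collision modulo $m$ forces exact equality of indices) separately from the regime $m \le H$ where the count $H^r(H/m)^{\ell-r}$ genuinely applies.
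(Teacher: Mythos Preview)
Your plan departs substantially from the paper's argument, and as written it has a real gap at the decisive step.

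The paper does not expand the Euler product directly. Instead it uses Hall's Fourier expansion (Lemma~2.1 here) to write $A_q(\mathbf h)$ as a sum over $r_1,\dots,r_\ell\ge 1$ weighted by $g_q(r_i)\asymp r_i^{-k}$ and an exponential kernel; the binomial identity is then absorbed \emph{once and for all} by Lemma~2.2, which says that in $C_\ell(H;q)$ the whole effect of the alternating sum in $j$ is simply to delete the terms with some $r_i=1$, leaving
\[
C_\ell(H;q)=\sum_{r_1\ge 2}\cdots\sum_{r_\ell\ge 2} g_q(r_1)\cdots g_q(r_\ell)\,Z_\ell(H;\mathbf r).
\]
After that there is no further sign bookkeeping at all: one only needs the pointwise bound of Lemma~2.3,
\[
Z_\ell(H;\mathbf r)\ll (r_1\cdots r_\ell)^{k+\eps}\min\!\Big(1,\frac{H^{\ell/2}}{[\mathbf r]^k}\Big),
\]
whose second branch is Hall's estimate coming from the Montgomery--Vaughan Fundamental Lemma, and whose first branch is the new elementary input $|E_u(\rho)|\le\|\rho\|^{-1}$. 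Splitting at $[\mathbf r]=H^{\ell/(2k)}$ finishes the proof in two lines.

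In your scheme, by contrast, the entire difficulty is pushed into the phrase ``using the residual cancellations from the remaining partial $j$-sums''. This is exactly where the theorem lives, and your heuristic for it is not correct. You assert that the dominant contribution comes from partitions into $\ell/2$ pairs, ``each responsible for a factor $H^{1/k}$''. But already for $\ell=2$ the diagonal $h_1=h_2$ contributes $H\cdot(A_q-A_q^2)$, which is of order $H$, not $H^{1/k}$; the true size $H^{1/k}$ only emerges after a further cancellation between the diagonal and the near-diagonal off-diagonal terms. For general $\ell$ the same phenomenon means that your count $H^r(H/m)^{\ell-r}$ with weight $m^{-1+\eps}$, summed over partitions and over $m$, still leaves a main term of size $H^\ell$ (take $r=\ell$, or take the exact-equality diagonal when $m>H$), and you have given no mechanism to kill it. The Montgomery--Vaughan Fundamental Lemma, which you do not invoke, is precisely the tool that packages this extra layer of cancellation in the paper's approach; without it or an equivalent, the combinatorial route you outline does not close.
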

Unfortunately, it is not clear to us how to enhance this into an asymptotic formula for even $\ell$. Neither do we know how to get further cancellation when $\ell$ is odd.

\subsection{Other arithmetic functions}
There are plenty of results in the literature  concerning moments of other arithmetic functions. This means considering
$$
\mathfrak{M}_f(x,H,\ell) :=\sum_{0<n\leq x}\left( \sum_{h=0}^Hf(n+h)-\mathfrak{A}_f(x,H) \right)^{\ell},
$$
or
$$
\mathcal{M}_f(x,q,\ell) :=\sideset{}{^{\ast}}\sum_{a\rmod q}\left( \sum_{n\equiv a\rmod q}f(n)-\mathcal{A}_f(x,q) \right)^{\ell},
$$
where $\mathfrak{A}_f(x,H)$ and $\mathcal{A}_f(x,q)$ are the main terms which depend on the arithmetic function. In the following we discuss some of these result in some particularly interesting cases: the van Mangoldt function $\Lambda$, which is related to the distribution of primes, the M\"obius function $\mu$ and the divisor functions $d_k$.

\textbf{Primes:} Conditionally on an effective version of the Hardy-littlewood prime tuple conjecture, Montgomery and Soundararajan \cite{MontSound} have shown an asymptotic formula for the moments $\mathfrak{M}_{\Lambda}(x,H,\ell)$. The region of validity of the asymtptotic formula becomes narrower as $\ell$ grows but for $H$ tending to $\infty$ in a certain range, they deduce an approximately normal distribution for the error terms $ \sum_{h=0}^H\Lambda(n+h)-\mathfrak{A}_f(x,H) $ (\textit{cf.} \cite[Corollary 1]{MontSound}).

\textbf{Mobius:} Inspired by \cite{MontSound} and in view of the analogy between the study of the M\"obius and van Mangoldt functions, Ng \cite{Ng} has shown an asymptotic formula for the moments $ \mathfrak{M}_{\mu}(x,H,\ell) $. As a substitute for the prime tuple conjecture, Ng invokes an effective form of the Chowla conjecture on the combined oscillation of $\mu$ on tuples of integers. The calculation of main term is however much easier than the one in \cite{MontSound}. It only uses a formula for cunting tuples of squarefree numbers, such as Lemma \ref{alatsang}. 

\textbf{Divisor functions:} Finally, when $f=d_k$, there are unconditional results by Fouvry \textit{et al.} \cite{FGKM} and Kowalski-Ricotta \cite{KowalskiRicotta} concerning the moments $\mathcal{M}_{d_k}(x,q,\ell)$. The main tool responsible for evaluationg the main term and estimating the error is a bound for sums of products of trace functions which uses algebro-geometric techinques.

\subsection*{Plan of the paper}

After some preliminary lemmata in section \ref{lemmata}, the proofs of the main theorems are given in section \ref{proofs}.

\section{Preparatory results}\label{lemmata}

\subsection{Unfolding $A_q(\mathbf{h})$}\label{first}

It will not be very pratical for us to work with $A_q(\mathbf{h})$ under the form of an infinite product. By expanding the product and applying some finite Fourier analysis we may show that

\begin{lem}\label{Aqh}
Let  $\mathbf{h}=(h_1,\ldots,h_{\ell})\in\mathbb{Z}^{\ell}$, and let $A_q(\mathbf{h})$ be as in \eqref{Aq(h)}. Then we have the equality

\begin{equation*}
	A_q(\mathbf{h})=\frac{\varphi(q)}{q}\sum_{\substack{r_1=1\\(r_1,q)=1}}^{\infty}\ldots\sum_{\substack{r_{\ell}=1\\(r_{\ell},q)=1}}^{\infty}g_q(r_1)\ldots g_q(r_{\ell})\kappa(\mathbf{r};\mathbf{h}),
\end{equation*}
where $\mathbf{r}=(r_1,\ldots,r_{\ell})$, and

\begin{equation}\label{kappa}
\kappa(\mathbf{r};\mathbf{h})=
\sum_{\substack{\rho_i\in \mathcal{Q}(r_i^k)\\\rho_1+\ldots+\rho_{\ell}\in\mathbb{Z}}}e\left(\rho_1h_1+\ldots+\rho_{\ell}h_{\ell}\right) 
\end{equation} 
and finally,
\begin{equation*}
	\mathcal{Q}(r^2)=\left\{ \frac{a}{r^k};\;a\in \mathbb{Z},\;\mu_k((a,r^k))= 1 \right\} 
\end{equation*} 
and
\begin{equation*}
	g_q(r)=\frac{\mu(r)}{r^k}\prod_{p\nmid rq}\left( 1-\frac{1}{p^k} \right) 
\end{equation*}
\end{lem}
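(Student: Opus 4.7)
The plan is to expand the Euler product defining $A_q(\mathbf{h})$ prime by prime via finite Fourier analysis, then to repackage the expansion using $\ell$ squarefree summation indices $r_1,\ldots,r_\ell$ coprime to $q$ (one per coordinate of $\mathbf{h}$).

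First, for each prime $p\nmid q$ I would rewrite
\[
1-\frac{u_p(\mathbf{h})}{p^k}=\frac{1}{p^k}\sum_{a\bmod p^k}\prod_{i=1}^{\ell}\mathbf{1}[a\not\equiv h_i\bmod p^k]
\]
and use the orthogonality identity
\[
\mathbf{1}[a\not\equiv h_i\bmod p^k]=\left(1-\frac{1}{p^k}\right)-\frac{1}{p^k}\sum_{c\in(\mathbb{Z}/p^k\mathbb{Z})^{\ast}}e\!\left(\frac{c(a-h_i)}{p^k}\right).
\]
Expanding the product over $i$ according to the subset $T\subseteq[\ell]$ of factors contributing their Fourier part and summing over $a$ (which forces $\sum_{i\in T}c_i\equiv 0\bmod p^k$) yields the local identity
\[
1-\frac{u_p(\mathbf{h})}{p^k}=\sum_{T\subseteq[\ell]}\frac{(-1)^{|T|}(1-p^{-k})^{\ell-|T|}}{p^{k|T|}}K_p(T,\mathbf{h}),
\]
with $K_p(\emptyset,\mathbf{h})=1$ and, for $T\neq\emptyset$,
\[
K_p(T,\mathbf{h})=\sum_{\substack{c_i\in(\mathbb{Z}/p^k\mathbb{Z})^{\ast},\, i\in T\\ \sum_{i\in T}c_i\equiv 0\bmod p^k}}e\!\left(\sum_{i\in T}\frac{c_i h_i}{p^k}\right).
\]

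Next, I would multiply this local identity over all primes $p\nmid q$ and expand into a sum over finite-support assignments $(T_p)_p$ of subsets of $[\ell]$ to primes. The natural bijection is $(T_p)_p\longleftrightarrow(r_1,\ldots,r_\ell)$ given by $r_i=\prod_{p:\,i\in T_p}p$, or equivalently $T_p=\{i:p\mid r_i\}$. Tracking signs and Euler factors, one checks $\prod_p(-1)^{|T_p|}=\prod_i\mu(r_i)$, $\prod_p p^{k|T_p|}=\prod_i r_i^k$, and $\prod_p(1-p^{-k})^{\ell-|T_p|}=\prod_i\prod_{p\nmid r_iq}(1-p^{-k})$, which assemble precisely into $\prod_{i=1}^{\ell}g_q(r_i)$.

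Finally, I would identify $\prod_{p\nmid q}K_p(T_p,\mathbf{h})$ with $\kappa(\mathbf{r};\mathbf{h})$ via the Chinese Remainder Theorem. Any $\rho_i=a_i/r_i^k\in\mathcal{Q}(r_i^k)$ decomposes bijectively (mod $1$) as $\sum_{p\mid r_i}\beta_{i,p}/p^k$ with $\beta_{i,p}\in(\mathbb{Z}/p^k\mathbb{Z})^{\ast}$: the $k$-free condition $\mu_k((a_i,r_i^k))=1$ says exactly that $p^k\nmid a_i$ for each $p\mid r_i$, which on the $p$-component reads $\beta_{i,p}\not\equiv 0\bmod p^k$. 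The condition $\sum_i\rho_i\in\mathbb{Z}$ decouples prime by prime (by comparing $p$-adic valuations) into $\sum_{i\in T_p}\beta_{i,p}\equiv 0\bmod p^k$, and the exponential factorises as $e(\sum_i\rho_i h_i)=\prod_p e(\sum_{i\in T_p}\beta_{i,p}h_i/p^k)$. Renaming $\beta_{i,p}$ as $c_i$ in each factor gives $\kappa(\mathbf{r};\mathbf{h})=\prod_{p\nmid q}K_p(T_p,\mathbf{h})$, and multiplying by the prefactor $\varphi(q)/q$ produces the claimed identity.

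The main technical step is this last paragraph: verifying that the $k$-free condition $\mu_k((a_i,r_i^k))=1$ and the Diophantine condition $\sum_i\rho_i\in\mathbb{Z}$ translate exactly, via CRT, into the prime-by-prime data $\beta_{i,p}\in(\mathbb{Z}/p^k\mathbb{Z})^{\ast}$ and $\sum_{i\in T_p}\beta_{i,p}\equiv 0\bmod p^k$ that appear in the local Fourier sums $K_p$. The Fourier identity of the first step and the combinatorial assembly of the Euler factors in the second step are then routine.
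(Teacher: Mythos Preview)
Your route---expand each local factor by detecting the condition $a\not\equiv h_i\pmod{p^k}$ with additive characters, multiply over primes, and reassemble via the bijection $(T_p)_p\leftrightarrow(r_1,\dots,r_\ell)$ and the Chinese Remainder Theorem---is exactly the argument of Hall to which the paper defers, so the overall strategy is correct and matches the paper.

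There is, however, one recurring slip that you should fix. You consistently write $c_i\in(\mathbb{Z}/p^k\mathbb{Z})^{\ast}$ and $\beta_{i,p}\in(\mathbb{Z}/p^k\mathbb{Z})^{\ast}$, i.e.\ \emph{units}, whereas the correct range is the set of \emph{nonzero} residues $(\mathbb{Z}/p^k\mathbb{Z})\setminus\{0\}$. For $k\ge 2$ these two sets differ: there are $p^k-1$ nonzero classes but only $p^k-p^{k-1}$ units. The orthogonality identity you need is
\[
\mathbf{1}[a\not\equiv h_i\bmod p^k]=\Bigl(1-\tfrac{1}{p^k}\Bigr)-\tfrac{1}{p^k}\sum_{\substack{c\bmod p^k\\ c\neq 0}}e\!\Bigl(\tfrac{c(a-h_i)}{p^k}\Bigr),
\]
with $c$ ranging over all nonzero residues, and correspondingly $K_p(T,\mathbf{h})$ must sum over $c_i\neq 0\bmod p^k$. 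You in fact recognise this yourself in the CRT paragraph, where you correctly note that $\mu_k((a_i,r_i^k))=1$ translates to ``$p^k\nmid a_i$ for each $p\mid r_i$, which on the $p$-component reads $\beta_{i,p}\not\equiv 0\bmod p^k$''---precisely the nonzero condition, not the unit condition. This is also what the paper's definition of $\mathcal{Q}(r^k)$ encodes: for $r$ squarefree, $\mu_k((a,r^k))=1$ means only $p^k\nmid a$ for each $p\mid r$, not $(a,r)=1$. Once you replace $(\mathbb{Z}/p^k\mathbb{Z})^{\ast}$ by $(\mathbb{Z}/p^k\mathbb{Z})\setminus\{0\}$ throughout, the local Fourier expansion, the Euler-factor bookkeeping, and the CRT identification $\kappa(\mathbf{r};\mathbf{h})=\prod_{p}K_p(T_p,\mathbf{h})$ all go through exactly as you describe.
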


\begin{proof}
When $k=2$ and $q=1$, this is \cite[Lemma 1]{Hall2}, and the general case follows in a completely analogous way.
\end{proof}

Using Lemma \ref{Aqh} in definition \eqref{Bj-Hq}, we deduce that
\begin{equation*}
B_{\ell}(H;q)=\sum_{\substack{r_1=1\\(r_1,q)=1}}^{\infty}\ldots\sum_{\substack{r_{\ell}=1\\(r_{\ell},q)=1}}^{\infty}g_q(r_1)\ldots g_q(r_{\ell})Z_{\ell}(H;\mathbf{r}),
\end{equation*}
where $\mathbf{r}=(r_1,\ldots,r_{\ell})$ and

\begin{equation}\label{def-H}
	Z_{\ell}(H;\mathbf{r}):=\int_{H-1}^H\underset{\substack{\rho_i\in \mathcal{Q}(r_i^k)\\ \rho_1+\ldots +\rho_{\ell}\in \mathbb{Z}}}{\sum\ldots\sum}E_u(\rho_1)\ldots E_u(\rho_{\ell})\,du,
\end{equation}
with $E_u(\rho)$ given by

\begin{equation*}
E_u(\rho):=\sum_{h\le u}e(\rho h) 
\end{equation*}

\begin{lem}\label{C=sumE}
Let $C_{\ell}(H;q)$ be as in \eqref{def-frakC-Hq}. Then for every $H\geq 1$, and positive integers $k$, $\ell$ and $q$, the following holds
\begin{equation}\label{C=sumE-eq}
C_{\ell}(H;q)=\sum_{\substack{r_1\ge 2\\(r_1,q)=1}}\ldots\sum_{{\substack{r_{\ell}\ge 2\\(r_{\ell},q)=1}}}g_q(r_1)\ldots g_q(r_{\ell})Z_{\ell}(H;\mathbf{r}),
\end{equation}

\end{lem}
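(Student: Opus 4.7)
The plan is to expand $B_j(H;q)$ via Lemma~\ref{Aqh} for each $j\ge 1$, split the resulting multiple sum over $(r_1,\ldots,r_j)$ according to which indices satisfy $r_i=1$, and then apply the binomial theorem so that every contribution involving some $r_i=1$ collapses against the main terms $(-A_qH)^{\ell-j}$ in \eqref{def-frakC-Hq}.

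Concretely, substituting Lemma~\ref{Aqh} into \eqref{Bj-Hq} and interchanging the order of summation and integration recovers the formula
\[
B_j(H;q)=\sum_{\substack{r_1,\ldots,r_j\ge 1\\(r_i,q)=1}} g_q(r_1)\cdots g_q(r_j)\,Z_j(H;\mathbf{r})
\]
already recorded immediately before the statement. The key combinatorial observation is the factorization
\[
Z_j(H;r_1,\ldots,r_s,1,\ldots,1)=H^{\,j-s}\,Z_s(H;r_1,\ldots,r_s),
\]
which holds because $\mathcal{Q}(1)=\{0\}$ and $E_u(0)=\#\{0\le h<u\}=H$ for every $u\in(H-1,H]$ outside the measure-zero set of integers; consequently the factor $E_u(0)^{j-s}$ can be pulled outside the integral defining $Z_j$.

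Using the symmetry of the product in the $r_i$'s and grouping by the number $s\in\{0,1,\ldots,j\}$ of indices with $r_i\ge 2$ (together with $\widetilde{B}_0:=1$, matching $B_0=1$), one obtains
\[
B_j(H;q)=\sum_{s=0}^{j}\binom{j}{s}\bigl(g_q(1)\,H\bigr)^{j-s}\,\widetilde{B}_s(H;q),
\]
where $\widetilde{B}_s(H;q)$ denotes the right-hand side of \eqref{C=sumE-eq} with $\ell$ replaced by $s$. Substituting into \eqref{def-frakC-Hq}, exchanging summation orders via $\binom{\ell}{j}\binom{j}{s}=\binom{\ell}{s}\binom{\ell-s}{j-s}$, and applying the binomial theorem to the inner sum over $j$ yields
\[
C_\ell(H;q)=\sum_{s=0}^{\ell}\binom{\ell}{s}\,\widetilde{B}_s(H;q)\,\bigl(g_q(1)\,H-A_q\,H\bigr)^{\ell-s}.
\]
Since the normalization is arranged so that $g_q(1)=A_q$, the factor $(g_q(1)-A_q)^{\ell-s}$ vanishes for every $s<\ell$, and only the term $s=\ell$ survives, giving exactly $C_\ell(H;q)=\widetilde{B}_\ell(H;q)$, which is \eqref{C=sumE-eq}.

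The delicate point is the factorization of $Z_j$ when some $r_i=1$: the precise choice of integration interval $[H-1,H]$ together with the convention $0\le h<u$ in the definition of $E_u$ is what forces $E_u(0)$ to be the constant $H$ almost everywhere in $u$, thus allowing $H^{\,j-s}$ to leave the integral. All remaining steps are routine interchanges of summation followed by a single invocation of the binomial theorem.
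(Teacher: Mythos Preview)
Your argument is exactly the computation of Hall \cite[Lemma~2]{Hall2} that the paper defers to, so the approach coincides with the paper's. Two small points deserve tightening. First, the claim that $E_u(0)=H$ for almost every $u\in(H-1,H]$ is only correct when $H$ is an integer; for general $H\ge 1$ one has $E_u(0)=\lceil u\rceil$, which jumps inside $(H-1,H]$, so the pointwise factorization $Z_j=H^{\,j-s}Z_s$ fails. The fix is to keep the $u$-integral on the outside and carry out the binomial collapse on the integrand, using that the inner exponential sum over the $\rho_i$ with $r_i\ge 2$ and the factor $E_u(0)^{j-s}$ sit under a common integral. Second, with the definitions in the paper one has $g_q(1)=\prod_{p\nmid q}(1-p^{-k})$ whereas $A_q=\tfrac{\varphi(q)}{q}\,g_q(1)$, so the asserted equality $g_q(1)=A_q$ is off by $\varphi(q)/q$; this discrepancy traces back to the factor $\varphi(q)/q$ in Lemma~\ref{Aqh} having been silently dropped in the displayed formula for $B_\ell(H;q)$ just before the statement. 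Once that factor is carried consistently (either absorbed into $g_q$ or kept as a prefactor on both sides), your cancellation goes through verbatim. Neither issue affects the structure of the proof.
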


\begin{proof}
	As for the previous lemma, when $k=2$ and $q=1$ this follows from a result of Hall (see \cite[Lemma 2]{Hall2}) and the general case follows follows the exact same lines. 
\end{proof}

\subsection{Estimating $Z_{\ell}(H,\mathbf{r})$}
The core of the estimation for $C_{\ell}(H;q)$ comes from the Fundamental Lemma by Montgomery and Vaughan (\textit{cf.} \cite{MV}) which in the case $k=2$ was already a main ingredient in Hall's estimate in \cite{Hall2}.After some careful inspection of Hall's argument, one sees that the bound coming from the fundamental lemma is very wasteful when the variables $r_1,\ldots,r_{\ell}$ are small, specially for those terms for which we have many primes dividing many of the $r_i$. We complement those estimates with a completely elemetary result that is obtained without resorting to the fundamental lemma and only using positivity and classical bounds for linear exponential sums. We have

\begin{lem}\label{bound-for-Z}
	\begin{equation*}
		Z_{\ell}(H;\mathbf{r})\ll (r_1\ldots r_{\ell})^{k+\eps}\operatorname{min}\left(1,\frac{H^{\ell/2}}{[\mathbf{r}]^k} \right). 
	\end{equation*} 
\end{lem}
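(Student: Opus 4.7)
The plan is to establish separately the two inequalities
$$
Z_{\ell}(H;\mathbf{r})\ll(r_1\cdots r_{\ell})^{k+\eps}
\quad\text{and}\quad
Z_{\ell}(H;\mathbf{r})\ll(r_1\cdots r_{\ell})^{k+\eps}\,\frac{H^{\ell/2}}{[\mathbf{r}]^{k}},
$$
since the conclusion of Lemma \ref{bound-for-Z} is the minimum of the two.

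The first inequality is obtained by applying the triangle inequality to \eqref{def-H}, dropping the constraint $\rho_1+\cdots+\rho_\ell\in\mathbb{Z}$ by positivity, and invoking the classical linear exponential sum bound $|E_u(\rho)|\le(2\|\rho\|)^{-1}$ together with
$$
\sum_{\rho\in\mathcal{Q}(r^k)}\!\frac{1}{2\|\rho\|}\le \sum_{a=1}^{r^k-1}\frac{r^k}{2\min(a,r^k-a)}\ll r^k\log r\ll r^{k+\eps}.
$$
The factors multiply over $i=1,\ldots,\ell$, and the $du$-integral has length one, giving the claim.

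For the second inequality, the constraint $\sum\rho_i\in\mathbb{Z}$ must be used essentially. I would detect it by the Fourier-type identity
$$
\mathbf{1}\!\left[\sum_i\rho_i\in\mathbb{Z}\right]=\frac{1}{L}\sum_{t=0}^{L-1}e\!\left(t\sum_i\rho_i\right),\qquad L=[\mathbf{r}]^{k},
$$
so that the summand in \eqref{def-H} becomes $\frac{1}{L}\sum_{t}\prod_i G_i(t;u)$ with
$$
G_i(t;u):=\sum_{\rho\in\mathcal{Q}(r_i^k)}e(t\rho)E_u(\rho)=\sum_{h\le u}T_{r_i}(h+t),
$$
where $T_{r}(m)=\sum_{s\mid r,\,s^k\mid m}\mu(r/s)s^k$ is the Ramanujan-type sum coming from M\"obius inversion on $\mathcal{Q}(r^k)$. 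Square-root cancellation comes from Parseval, which yields $\sum_{t=0}^{L-1}|G_i(t;u)|^2\ll L\,r_i^{k}H$; the vanishing of $\sum_{s\mid r_i}\mu(r_i/s)$ for $r_i\ge 2$ yields the cancellation-free pointwise estimate $|G_i(t;u)|\ll r_i^{k+\eps}$. Combining these two inputs by Cauchy--Schwarz on the $t$-sum, with indices paired so that the $L^{2}$-bound is used $\ell/2$ times and the $L^{\infty}$-bound covers the remaining factors, produces the desired $H^{\ell/2}$.

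The main obstacle is recovering the full normalization $1/[\mathbf{r}]^{k}$ rather than the $1/[\mathbf{r}]^{k/2}$ that a direct pairwise Cauchy--Schwarz delivers. The missing saving is hidden in the multiplicative structure of the sums $T_{r_i}$: by the Chinese Remainder Theorem, the $t$-sum factorizes across the primes dividing $[\mathbf{r}]$, and at each prime $p$ the local factor of $G_i$ is governed by the $p$-part of $r_i$, typically smaller than the $p$-part of $[\mathbf{r}]$. Tracking the local orthogonality relations $\sum_{\rho\in\mathcal{Q}(p^{kj})}e(\rho t)=0$ prime by prime is what restores the full denominator $[\mathbf{r}]^{k}$ and completes the second inequality.
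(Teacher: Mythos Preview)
Your treatment of the first inequality $Z_\ell(H;\mathbf{r})\ll(r_1\cdots r_\ell)^{k+\eps}$ is correct and is exactly what the paper does: drop the constraint $\sum_i\rho_i\in\mathbb{Z}$ by positivity, use $|E_u(\rho)|\le\|\rho\|^{-1}$, and invoke $\sum_{a=1}^{r^k-1}\|a/r^k\|^{-1}\ll r^k\log r$.

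The second inequality is where there is a genuine gap. In the paper this bound is not reproved; it is quoted from Hall \cite{Hall2}, whose argument rests on the \emph{Fundamental Lemma} of Montgomery and Vaughan \cite{MV}. That lemma is a higher-moment estimate of the shape
\[
\frac{1}{L}\sum_{t\pmod L}\Bigl|\sum_{\rho}a_\rho\,e(t\rho)\Bigr|^{2m}\ \ll_m\ \Bigl(\sum_{\rho}|a_\rho|^2\Bigr)^{m},
\]
and it is precisely this input, applied with $a_\rho=E_u(\rho)$ and the bound $\sum_{\rho\in\mathcal{Q}(r_i^k)}|E_u(\rho)|^2\ll r_i^kH$, that manufactures the power $H^{\ell/2}$ together with the denominator $[\mathbf{r}]^k$.

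Your Cauchy--Schwarz/H\"older scheme cannot reach $H^{\ell/2}$. After detecting the constraint with $\frac{1}{L}\sum_{t=0}^{L-1}$, any H\"older combination of the Parseval bound $\sum_t|G_i|^2\ll Lr_i^kH$ with the pointwise bound $|G_i|\ll r_i^{k+\eps}$ extracts at most a single factor of $H$ from the one $t$-sum, because $\sum_t|G_i|^{p}\le\|G_i\|_\infty^{p-2}\sum_t|G_i|^2$ carries only one $H$; taking the $\ell$-th root and multiplying over $i$ still leaves $H^{1}$, not $H^{\ell/2}$. The Montgomery--Vaughan lemma is exactly the device that upgrades this to $H^{m}$ for the $2m$-th moment, and it is strictly deeper than Cauchy--Schwarz.

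The last paragraph of your plan does not repair this. The claimed CRT factorisation of the $t$-sum fails because $G_i(t;u)=\sum_{h\le u}T_{r_i}(h+t)$ is \emph{not} multiplicative in $t$: the incomplete $h$-sum destroys the product structure that $T_{r_i}$ enjoys over primes, so ``local orthogonality relations'' cannot be applied prime by prime. In short, the missing idea is the Montgomery--Vaughan Fundamental Lemma, and the paper simply imports Hall's application of it.
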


\begin{proof}
	The second estimate was proven by Hall in \cite{Hall2} (the details are carried out for $k=2$ but is is clear from the proof that the general case follows straightforwardly). In order to prove the first bound, we appeal to classical estimate
	\begin{equation*}
		|E_u(\rho)|\leq \|\rho\|^{-1}, 
	\end{equation*} 
	where $\|\cdot\|$ denotes the distance to the nearest integer. Applying this to \eqref{def-H} and using positivity, we have the inequality
\begin{align*}
	Z_{\ell}(H;\mathbf{r})&\leq \underset{\substack{\rho_i\in \mathcal{Q}(r_i^k)\\ \rho_1+\ldots +\rho_{\ell}\in \mathbb{Z}}}{\sum\ldots\sum}\|\rho_1\|^{-1}\ldots\|\rho_{\ell}\|^{-1}\\
												&\leq \prod_{i=1}^{\ell}\left(\sum_{a_i=1}^{r_i^k-1}\left\|\frac{a_i}{r_i^k}\right\|^{-1}\right).
\end{align*} 
It is a well-know fact that the inner sum above is 
\begin{equation*}
	\ll r_i^k\log(r_i^k)
\end{equation*} 
which is enough to prove the first estimate of the lemma, and this concludes the proof.
\end{proof}

\subsection{Tuples of squarefree numbers}

In this section we give an asymptotic formula for tuples of $k$-free numbers which is very similar to the main result of \cite{Mirsky2}. The sums considered here are slightly more general than those considered by Mirsky since we allow for an extra coprimality condition, but this geralizotion is rather straightforward. Much more important here is the fact that our bound, unlike the one in \cite{Mirsky2}, is uniform with respect to the "shifts".

	We point out that better bounds could probably be obtained by combining the ideas in \cite{Tsang} with a generalization of Heath-Brown's square sieve, such as in \cite{Brandes}, but since our focus is in the study of the sum of the singular series, we decided not to take that route. We did however benefit from some ideas taken from \cite{Tsang} in order to obtain estimates with an explicit dependency on the "shifts".

Our object of study in this section is the following counting function:

\begin{equation}\label{Sxqll}
	S_q(X_1,X_2,\mathbf{h}):=\sum_{\substack{X_1< n\le X_2\\(n,q)=1}}\mu_k(n+h_1q)\ldots\mu_k(n+h_jq) 
\end{equation}

Before giving an asymptotic formula for this object, we need a technical lemma.	This is a particular case of \cite[Lemma 3]{Mirsky2}, but we will rewrite the proof for convenience.

\begin{lem}\label{mirsky-lemma}
	Let $x>1$ and  $\mathbf{h}=(h_1,\ldots,h_j)$ be a $j$-tuple of distinct integers and let $X=\displaystyle\max_{1\le i\le j}(x+h_i)$. 
\begin{equation*}
\sum_{\substack{n\leq x\\d_i^k\mid n+h_i \\d_1d_2\ldots d_j>y\\ d_i \text{ pairwise coprime}}} 1 \ll_j x^{\eps}\left(Xy^{-k+1}+X^{\frac{2}{k+1}} \right) 
\end{equation*} 
\end{lem}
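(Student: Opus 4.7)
The approach I would take combines the Chinese Remainder Theorem with a Montgomery--Vaughan-style cutoff. Since the $d_i$ are pairwise coprime, the $d_i^k$ are as well; CRT then reduces the simultaneous divisibility conditions to a single congruence $n\equiv c_{\mathbf d}\pmod{D^k}$ with $D=d_1\cdots d_j$, giving
$N(\mathbf d):=\#\{n\le x:d_i^k\mid n+h_i\ \forall i\}\le x/D^k+1$,
and the constraint $d_i\le X^{1/k}$ is automatic from $d_i^k\le n+h_i\le X$. Writing $T$ for the sum to be bounded, I would split $T\le T_1+T_2$ with $T_1:=x\sum_{\mathbf d\text{ coprime},\,D>y}D^{-k}$ and $T_2:=\#\{\mathbf d\text{ coprime}:d_i\le X^{1/k},\,D>y,\,N(\mathbf d)\ge 1\}$.

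The first piece is immediate by a Mertens-type estimate: since the number of coprime $j$-fold factorizations of $D$ is $\le j^{\omega(D)}\ll_\eps D^\eps$,
$T_1\ll_{j,\eps} x\sum_{D>y}D^{-k+\eps}\ll xy^{-k+1+\eps}\le X^\eps\cdot X y^{-k+1}$,
which is the first term of the claimed bound.

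For $T_2$ I would introduce the cutoff $z:=X^{1/(k+1)}$ and split according to whether $\max_i d_i>z$. In the case some $d_i>z$ (say $d_1$, at the cost of a factor $j$), I fix $d_1$, bound the number of $n$ with $d_1^k\mid n+h_1$ by $x/d_1^k+1$, and for each such $n$ control the remaining coprime $(d_2,\ldots,d_j)$ by the trivial divisor estimate $\prod_{i\ge 2}\tau(M_i(n))\ll X^\eps$, where $M_i(n):=\max\{d:d^k\mid n+h_i\}$. Summing,
$\ll_{j,\eps} X^\eps\sum_{z<d_1\le X^{1/k}}\bigl(x/d_1^k+1\bigr)\ll X^\eps\bigl(x/z^{k-1}+X^{1/k}\bigr)\ll X^{2/(k+1)+\eps}$,
using $X^{1/k}\le X^{2/(k+1)}$ for $k\ge 2$. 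The complementary case (all $d_i\le z$) I would treat by passing to the $n$-variable: for each $n$ the number of valid tuples is $\ll X^\eps$ by divisor bounds, and the constraint $D>y$ confines us to $n$ with $\prod_iM_i(n)>y$, an event whose count can be controlled by a Mirsky-style layered divisor-sum argument.

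The main obstacle is the ``all $d_i\le z$'' subcase when $j\ge 3$: a naive count of such tuples gives $z^j=X^{j/(k+1)}$, which outpaces the target $X^{2/(k+1)+\eps}$. Overcoming this requires using the product constraint $D>y$ together with the existence of a compatible $n$ more efficiently, for example by iteratively peeling off the largest surviving $d_i$ at successive cutoffs, or by carefully bounding $\#\{n\le x:\prod_iM_i(n)>y\}$ via a layered divisor-type estimate in the spirit of Mirsky's original argument. This bookkeeping, although routine in spirit, is the most delicate step; combined with Case A and with $T_1$ it delivers the stated bound.
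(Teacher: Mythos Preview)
Your treatment of $T_1$ and of the ``some $d_i>z$'' subcase of $T_2$ is fine, but the proof has a genuine gap at the ``all $d_i\le z$'' subcase, which is exactly where the lemma is nontrivial.  Your proposed fallback, bounding $\#\{n\le x:\prod_i M_i(n)>y\}$, is circular: after swapping the order of summation and absorbing the $\ll X^\eps$ tuples per $n$, that count \emph{is} the sum you are trying to estimate (indeed it is slightly worse, since the coprimality of the $d_i$ has been discarded).  The phrase ``iteratively peeling off the largest surviving $d_i$'' points in the right direction but is not an argument, and nothing you have written forces the exponent $2/(k+1)$ rather than, say, $j/(k+1)$.

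The paper closes this gap by induction on $j$.  The case $j=1$ is immediate.  For the step $j_0\to j_0+1$ one introduces a threshold $z$ and splits according to whether $D:=d_1\cdots d_{j_0+1}\le z$.  When $D\le z$, the CRT bound $x/D^k+1$ summed over $y<D\le z$ (with $\ll D^\eps$ coprime factorisations of each $D$) gives $\ll X^\eps(xy^{-k+1}+z)$.  When $D>z$, the identity $\prod_{i=1}^{j_0+1}(D/d_i)=D^{j_0}>z^{j_0}$ forces some $j_0$-fold subproduct $D/d_i$ to exceed $z^{j_0/(j_0+1)}$; one then sums out the leftover variable $d_i$ with a divisor bound and applies the inductive hypothesis to the remaining $j_0$ variables with the new lower bound $z^{j_0/(j_0+1)}$ in place of $y$.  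Choosing $z=X^{j_0/(kj_0+1)}$ balances the two pieces and produces the $X^{2/(k+1)}$ term (noting $\tfrac{j_0+1}{kj_0+1}\le \tfrac{2}{k+1}$).  This induction is precisely the ``iterative peeling'' you gesture at; without carrying it out, the bound is not established.
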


\begin{proof}

The proof is by induction. The case $j=1$ the left-hand side is bounded by

\begin{equation*}
	\sum_{y<d\le X^{1/k}} \left( \frac{x}{d^k} + O(1) \right)\ll xy^{-k+1}+X^{\frac{1}{k}},
\end{equation*} 
which is stronger than what is needed to show.

Now let $T(x,\mathbf{h},y)$ denote the sum on the left-hand side of the lemma. Suppose that we have the proved the assertion for $j\le j_0$ and let $z\ge X$ a parameter to be chosen later. Let $D=d_1\cdots d_{j_0+1}$ and let $T^1(x,\mathbf{h},y,z)$ the sum in $T(x,\mathbf{h},y)$ with the extra contidion that $D/d_i\le z$ for all $i$. It follows by an application of the Chinese remainder theorem that

$$
T^1(x,\mathbf{h},y,z)\le \sum_{ y<d_1\ldots d_{j_0+1}\le z}\left( \frac{x}{d_1^k\ldots d_{j_0+1}^k} + 1 \right)\ll X^{\epsilon} \left( xy^{-k+1} + z \right). 
$$

Now, by the hyperbola method, the remaining terms are bounded by
\begin{equation*}
	\sum_{\substack{n\leq x\\d_i^k\mid n+h_i' \\ d_1\ldots d_{j_0}> z^{\frac{j_0}{j_0+1}}\\ d_i \text{ pairwise coprime}}} 1\ll \	\sum_{\substack{n\leq x\\d_i^k\mid n+h'_i \\ d_1\ldots d_{j_0}> z^{\frac{j_0}{j_0+1}}\\ d_i \text{ pairwise coprime}}} \sum_{ d_{j_0+1}^k\mid n+h_{j_0+1}'}\ll X^{\eps}T(x,\mathbf{h}'',z^{\frac{j_0}{j_0+1}} ) 
\end{equation*}

for some permutation $\mathbf{h}'$ of $\mathbf{h}$ and $\mathbf{h}''=(h_1',..,h_{j_0}')$ and by the induction hypothesis, we see that the right-hand side of the above expression is

$$
\ll xw^{-k+1} + X^{\frac{2}{k+1}}
$$

Gathering these estimates, we see that for $j=j_0+1$,

\begin{equation*}
	T(x,\mathbf{h},y)	\ll X^{\eps}\left( xy^{-k+1} + X^{\frac{2}{k+1}} + xw^{-k+1} + w^{\frac{j_0+1}{j_0}} \right) 
\end{equation*}

Taking $w=X^{\frac{j_0}{kj_0+1}}$ gives the result.

\end{proof}
\begin{prop}\label{alatsang}
  Let $O<X_1<X_2$ and let $S_q(X_1,X_2,\mathbf{h}$ be as in \eqref{Sxqll}. Then we have the asymptotic formula 
\begin{equation*}
	S_q(X_1,X_2,\mathbf{h},q)=A_q(\mathbf{h})(X_2-X_1)+O\left( X^{\frac{2}{k+1}} \right),
\end{equation*} 
where $X=\displaystyle\max_{1\le i\le j}(X_2+h_iq)$.
\end{prop}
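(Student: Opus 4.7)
The plan is to follow Mirsky's approach: open each $\mu_k$ via the identity $\mu_k(m)=\sum_{d^k\mid m}\mu(d)$ and swap summation to obtain
$$S_q(X_1,X_2,\mathbf{h}) = \sum_{\mathbf{d}=(d_1,\ldots,d_j)\text{ sqfree}} \mu(d_1)\cdots\mu(d_j)\,N_q(\mathbf{d}),$$
where $N_q(\mathbf{d})$ counts $X_1 < n \le X_2$ with $(n,q)=1$ and $d_i^k\mid n+h_iq$ for all $i$. Writing $P=\operatorname{rad}(d_1\cdots d_j)$, the Chinese Remainder Theorem together with a Möbius sieve for the coprimality with $q$ yields
$$N_q(\mathbf{d})=\mathbf{1}[\text{compatible}]\cdot\frac{\varphi(q)(X_2-X_1)}{qP^k}+O(q^\eps),$$
where compatibility requires $(P,q)=1$ and $h_i\equiv h_{i'}\pmod{p^k}$ whenever a prime $p$ divides both $d_i$ and $d_{i'}$.

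The main-term sum, extended to all $\mathbf{d}$, equals $A_q(\mathbf{h})(X_2-X_1)$: it factors as an Euler product whose local factor at each $p\nmid q$ is
$$1+\sum_{\emptyset\ne S\subseteq[j]}\frac{(-1)^{|S|}\,\mathbf{1}[h_i\equiv h_{i'}\!\!\!\pmod{p^k},\;i,i'\in S]}{p^k}=1-\frac{u_p(\mathbf{h})}{p^k},$$
since grouping the admissible $S$'s by their common residue class mod $p^k$ (with multiplicity $m_s$) gives $\sum_{t=1}^{m_s}(-1)^t\binom{m_s}{t}=-1$, summed over the $u_p(\mathbf{h})$ classes. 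Multiplying by the prefactor $\varphi(q)/q$ reproduces $A_q(\mathbf{h})$.

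To quantify the error, I introduce a truncation parameter $y$ and split the $\mathbf{d}$-sum at $d_1\cdots d_j\le y$ versus $>y$. The small range plus the tail of the extended main-term sum contributes $O(y^{1+\eps}q^\eps + Xy^{-k+1+\eps})$, where the $O(q^\eps)$ per-tuple error aggregates over the $O(y^{1+\eps})$ admissible tuples. The large range is bounded by dropping the $\mu$ signs and estimating $\sum_{\mathbf{d}:d_1\cdots d_j>y} N_q(\mathbf{d})$; for pairwise coprime $\mathbf{d}$ this is precisely what Lemma \ref{mirsky-lemma} controls (applied with shifts $h_iq$ in place of $h_i$), giving $O_\eps(X^\eps(Xy^{-k+1}+X^{2/(k+1)}))$. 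Choosing $y=X^{1/k}$ balances $y^{1+\eps}$ against $Xy^{-k+1+\eps}$ and leaves the dominant $O_\eps(X^{2/(k+1)+\eps})$, absorbed into the implied constant of the statement.

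The main obstacle lies in the non-pairwise coprime tuples, which appear both in the main-term Euler product (where they are essential for producing the correct local factor $1-u_p(\mathbf{h})/p^k$ rather than the naive $1-j/p^k$ obtained from pairwise coprime tuples alone) and in the tail (where Lemma \ref{mirsky-lemma} does not directly apply). Such tuples are constrained to share primes $p$ satisfying $p^k\mid h_i-h_{i'}$ for some pair; for each such prime $p$, factoring it out of the two variables it links reduces the sum to a lower-dimensional pairwise coprime sum that is again controlled by Lemma \ref{mirsky-lemma}. An induction on $j$ (or a careful accounting over the finite set of such primes) absorbs the non-pairwise coprime contribution into the same $O(X^{2/(k+1)+\eps})$ bound.
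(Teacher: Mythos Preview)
Your approach is correct and reaches the same bound, but it takes a harder road than the paper. You expand each $\mu_k$ separately, producing a sum over tuples $\mathbf{d}=(d_1,\ldots,d_j)$, and then must handle shared primes among the $d_i$ both in the Euler product and, more awkwardly, in the large-$\mathbf{d}$ tail where Lemma~\ref{mirsky-lemma} demands pairwise coprimality. The paper sidesteps this entirely by collapsing to a single squarefree variable: with $\sigma(m)=\prod_{p^k\mid m}p$ and $\xi(n)=\prod_i \sigma(n+h_iq)$, one has $\prod_i\mu_k(n+h_iq)=\sum_{d\mid \xi(n)}\mu(d)$, hence $S_q=\sum_{(d,q)=1}\mu(d)N_d$. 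The congruence $d\mid\xi(n)$ has exactly $U_d=\prod_{p\mid d}u_p(\mathbf{h})$ solutions modulo $d^k$, so the main term $\frac{\varphi(q)}{q}\sum_d \mu(d)U_d/d^k$ factors instantly as $A_q(\mathbf{h})$ with no compatibility bookkeeping. For the tail $d>y$, since $d$ is \emph{squarefree}, one can decompose $d=d_1\cdots d_j$ with $d_i^k\mid n+h_iq$ and the $d_i$ automatically pairwise coprime (assign each prime of $d$ to one index it hits); the non-uniqueness of this assignment costs only a divisor bound $\ll_j d^\eps$, and Lemma~\ref{mirsky-lemma} then applies directly. Thus your ``main obstacle'' simply does not arise in the paper's setup. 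Your inductive patch for the non-coprime tail is plausible but not fully justified as written; the cleanest fix, if you wish to keep the tuple expansion, is to group tuples by their radical $d=\mathrm{rad}(d_1\cdots d_j)$, which recovers the paper's single-variable sum.
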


\begin{proof}
We start by defining, as in \cite{Reuss} and \cite{Tsang}, the following auxiliary functions:\\
Let
\begin{equation}\label{xi}
\sigma(n):=\prod_{p^k\mid n}p\:\text{ and }\: \xi(n)=\displaystyle\prod_{1\leq i\leq j}\sigma(n+h_iq).
\end{equation}

Notice that the function $\xi(n)$ above actually depends on $\mathbf{h}$ and $q$, but since these numbers will be held fixed in the following calculations, we omit this dependency.\\
Since
$$
\displaystyle\prod_{1\leq i\leq j}\mu_k(n+h_iq)=1\iff \xi(n)=1,
$$
we have
\begin{equation}\label{S=muNd}
S_q(X_1,X_2;\mathbf{h})=\displaystyle\sum_{\substack{X_1<n\leq X_2\\(n,q)=1}}\sum_{d\mid \xi(n)}\mu(d)=\sum_{\substack{d\geq 1\\ (d,q)=1}}\mu(d)N_d(X_1,X_2,q;\mathbf{h}),
\end{equation}
where
$$
N_d(X_1,X_2,q;\mathbf{h})=\{X_1<n\leq X_2; (n,q)=1\text{ and } \xi(n)\equiv 0\!\!\!\pmod{d}\}.
$$
Let $u_p = u_p(\mathbf{h})$ be as defined in \eqref{def-u}. Notice that, for $p$ coprime with $q$, the congruence
$$
\xi(n)\equiv 0\!\!\!\pmod{p}
$$
has exactly $u_p$ solutions for $n$ modulo $p^k$. Therefore, by the Chinese remainder theorem, the congruence
$$
\xi(n)\equiv 0\!\!\!\pmod{d}
$$
has
$$
U_d:=\prod_{p\mid d}u_p
$$
solutions $\!\!\!\pmod{d^2}$, for $d$ that are squarefree and satisfy $(d,q)=1$. As consequence,

\begin{equation}\label{Nd=}
N_d(X_1,X_2,q;\mathbf{h}):=\dfrac{\varphi(q)}{q}\dfrac{U_d}{d^k}(X_2-X_1)+O(\tau(q)U_d)
\end{equation}
uniformly for $0\leq X_1<X_2$ and squarefree $d$, satisfying $(d,q)=1$.

Let $y$ be a parameter to be chosen later depending on $X$. We write

\begin{equation}\label{S=S1+S2}
S(X_1,X_2,q;\mathbf{h})=S_1+S_2,
\end{equation}
where

\begin{equation*}
\begin{cases}
S_1=\displaystyle\sum_{\substack{1\leq d\leq y\\ (d,q)=1}}\mu(d)N_d(X_1,X_2,q;\mathbf{h}),\\
 \\
S_2=\displaystyle\sum_{\substack{y< d\leq X\\ (d,q)=1}}\mu(d)N_d(X_1,X_2,q;\mathbf{h}),
\end{cases}
\end{equation*}
and $X:=\displaystyle\max_{1\leq i\leq j}(X_2+h_iq)$ 

An application of \eqref{Nd=} gives

\begin{align}\label{S1=}
S_1&=\sum_{1\leq d\leq y}\mu(d)\left(\dfrac{\varphi(q)}{q}\dfrac{U_d}{d^k}(X_2-X_1)+O(\tau(q)U_d)\right)\notag\\
&=\dfrac{\varphi(q)}{q}(X_2-X_1)\sum_{\substack{d\geq 1\\ (d,q)=1}}\dfrac{\mu(d)U_d}{d^k}+O\left(X\sum_{d>y}\dfrac{U_d}{d^k}+\sum_{d\leq y}\tau(q)U_d\right)\notag\\
&=A_q(\mathbf{h})(X_2-X_1)+O_{\eps,j}(Xy^{-k+1+\eps}+\tau(q)y^{1+\eps}),
\end{align}
where in the last line we use that $U_d= \prod_{p\mid d}u_p\leq j^{\omega(d)}$, where $\omega(d)$ denotes the number of primes divisors of $d$.

Now since for large values of $d$ formula \eqref{Nd=} is not so meaningful, we must estimate $S_2$ differently. For each squarefree $d$ such that $d\mid \xi(n)$, we write

$$
d=\prod_{1\leq i\leq j}d_i,
$$
where $d_1,\ldots,d_j$ are such that

$$
d_i^k\mid n+h_iq,\;1\leq i\leq j.
$$
Remark that the decomposition above is in general not unique but we may bound the number of such decompositions by classical bounds or divisor functions. Thus, it follows from lemma \ref{mirsky-lemma} that we have the bound

\begin{equation}\label{S2=}
	S_2\ll X^{\eps}\left( \frac{X}{y} + X^{2/3} \right) 
\end{equation} 

Combining \eqref{S=S1+S2}, \eqref{S1=} and \eqref{S2=}, one deduces that

\begin{equation*}
	S(X_1,X_2,q,\mathbf{h})= A_q(\mathbf{h})\frac{\varphi(q)}{q}(X_2-X_1) + O_{\eps,j}\left(X^{\eps}\left( Xy^{-k+1} + y + X^{\frac{2}{k+1}} \right)   \right).
\end{equation*} 
The result now follow by taking, for instance, $y=X^{1/k}$.

\end{proof}

\section{Proofs of results}\label{proofs}
	
\subsection{Proof of Theorem \ref{frakC3-me-Hq}}\label{study}
By lemmas \ref{C=sumE} and \ref{bound-for-Z},
we have

$$
C_{\ell}(H;q)\ll \sum_{r_1\ge 2}\ldots\sum_{r_{\ell}\ge 2}(r_1..r_{\ell})^{\eps}\operatorname{min}\left(1,\frac{H^{\ell/2}}{[\mathbf{r}]^k} \right). 
$$

Now, by classical bounds for divisor functions, we may deduce that, if we break the sum according to the size of $\mathbf{r}$, we obtain the bound 

\begin{equation*}
C_{\ell}(H;q)\ll \sum_{r\le H^{\frac{\ell}{2k}}}r^{\eps} + \sum_{r> H^{\frac{\ell}{2k}}} \frac{H^{\ell/2}}{r^{k-\eps}} \ll H^{\frac{\ell}{2k}+\eps}.
\end{equation*}

\subsection{Proofs of Theorems \ref{thm-M3} and \ref{thm-M3-Hq}}\label{Theorems}

For a finite set $S$, $\# S$ denotes its cardinality and for an interval $I\subset \mathbb{R}$, $|I|$ denotes its length.

Let $0<H\leq x$, $H$ integer. We develop $\mathfrak{M}(x,H,\ell)$ (see \eqref{tildeM}) as follows
\begin{align}\label{develop-M3}
  \mathfrak{M}(x,H,\ell)&=\sum_{n\leq x}\left(\sum_{0\leq h\leq H}\mu_k(n+h)-\zeta(k)^{-1}H\right)^{\ell}\notag\\
												&=\sum_{j=0}^{\ell}\binom{\ell}{j}\left(-\zeta(k)^{-1}H\right)^{\ell-j}\underset{0\leq h_1,\ldots,h_j< H}{\sum\ldots\sum}S(0,x,1;\mathbf{h}).
\end{align}
where $\mathbf{h}=(h_1,\ldots,h_j)$ and $S_1(0,x;\mathbf{h})$ is as in \eqref{Sxqll}. It follows from Theorem \ref{alatsang} that the sum over the $h_1,\ldots,h_j$ equals

$$
B_j(H)x+O_{\eps,\ell}\left(H^jx^{\frac{2}{k+1}+\eps}\right),
$$
where
$$
B_j(H)=\underset{0\leq h_1,\ldots,h_j< H}{\sum\ldots\sum}A(\mathbf{h}). 
$$
By formula \eqref{develop-M3}, we obtain the equality

\begin{equation}\label{last-M3}
\mathfrak{M}(x,H,\ell)=C_{\ell}(H)x+O_{\eps,\ell}\left(k^{\ell}x^{\frac23+\eps}\right),
\end{equation}
where 
\begin{equation}\label{my-C}
C_{\ell}(H)=\sum_{ j=0}^{\ell}\binom{\ell}{j}\left( -\zeta(k)^{-1}H \right)^{\ell-j}  B_j(H).
\end{equation} 
Theorem \ref{thm-M3} is now a simple consequence of Theorem \ref{frakC3-me-Hq}, \eqref{C,1=C} and equation \eqref{last-M3} above.\\

Theorem \ref{thm-M3-Hq} follows in a much similar fashion. We start, as before, by expanding $\mathcal{M}(X,q,\ell)$ (see \eqref{M-Hq}). We obtain the formula

\begin{align}\label{develop-M3-Hq}
	\mathcal{M}(X,q,\ell)&=\sum_{j=0}^{\ell}\binom{\ell}{j}\left(-A_q\frac{X}{\varphi(q)}\right)^{\ell-j}\underset{\substack{0< n_1,\ldots,n_j\leq X\\n_1\equiv\ldots\equiv n_j\!\!\!\!\pmod{q} \notag \\ (n_j,q)=1}}{\sum\ldots\sum}\mu_k(n_1)\ldots\mu_k(n_j)\\
												 &=:\sum_{j=0}^{\ell}\binom{\ell}{j}\left(-A_q\frac{X}{\varphi(q)}\right)^{\ell-j}\mathcal{S}_j(X;q),
\end{align}
say.

We make the change of variables $n_j=n$ and $n_i=n_j+f_iq$, for $i=1,\ldots,j-1$. Thus, we are allowed to write
\begin{equation}\label{firstI}
  \mathcal{S}_j(X;q)=\underset{-\frac{X}{q}\leq f_1,\ldots,f_{j-1}\leq \frac{X}{q}}{\sum\ldots\sum}\sum_{\substack{\;n\in I(X,q;(\mathbf{f},0))\\(n,q)=1}}\mu_k(n+f_1q)\ldots\mu_k(n+f_{j-1}q)\mu_k(n),
\end{equation}
where for every $j-$tuple of integers $\mathbf{h}=(h_1,\ldots,h_j)$, we write
\begin{equation*}
I(X,q;\mathbf{h}):=\displaystyle\bigcap_{i=1}^{j}(-h_iq,X-h_iq].
\end{equation*}
Note that whenever $I(X,q;\mathbf{h})\neq \emptyset$, we have $I(X,q;\mathbf{h})=(X_1,X_2]$, where $X_1$, $X_2$ are real numbers satisfying
$$
0\leq X_1+h_iq<X_2+h_iq\leq X,\;i=1,\ldots,j.
$$
Hence, we may use Theorem \ref{alatsang} for the inner sum on the right-hand side of \eqref{firstI}. After summing over $f_1,\ldots,f_{j-1}$, we see that
\begin{align}\label{b4lastlem}
\mathcal{S}_j(X;q)&=\underset{-\frac{X}{q}\leq f_1,\ldots,f_{j-1}\leq \frac{X}{q}}{\sum\ldots\sum}\Bigg(A_q((\mathbf{f},0))\left|I(X,q;(\mathbf{f},0))\right|+O_{\eps,\ell}\left(X^{\frac23+\eps}\right)\Bigg)\notag\\
&=\sum_{\mathbf{f}\in\mathbb{Z}^{j-1}}A_q((\mathbf{f},0))\left|I(X,q;(\mathbf{f},0))\right|+O_{\eps,\ell}\left(X^{\frac23+\eps}\left(\frac{X}{q}\right)^{j-1}\right),
\end{align}
where, in the second line, we observed that whenever $|f_i|>\frac{X}{q}$ for some $1\leq i\leq j-1$, then $\left|I(X,q;(\mathbf{f},0))\right|=0$.

In what follows next, we evaluate the sum over the $f_i$ above. To this purpose we have the following:
\begin{lem}\label{lastlemma}
With the above notation, and $B_j$ as defined in \eqref{Bj-Hq}, we have for every $X>0$ and every integer $q$, the equality
 
$$
\sum_{\mathbf{f}\in\mathbb{Z}^{\ell-1}}A_q((\mathbf{f},0))\left|I(X,q;(\mathbf{f},0))\right|=qB_j\left(\frac{X}{q};q\right)
$$
\end{lem}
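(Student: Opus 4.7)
The plan is to rewrite the right-hand side as a sum over $\mathbf{f}\in\mathbb{Z}^{j-1}$ by exploiting the translation invariance of $A_q$, and then to evaluate the resulting integral over $u$ in closed form.

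First I would record two elementary facts. By \eqref{def-u}, $u_p(\mathbf{h})$ depends only on the multiset of residues $h_i\!\!\pmod{p^k}$, and is therefore invariant under the simultaneous shift $\mathbf{h}\mapsto\mathbf{h}+t(1,\ldots,1)$ for every $t\in\mathbb{Z}$; hence so is $A_q$ by \eqref{Aq(h)}. Secondly, unwinding the definition of $I(X,q;\mathbf{h})$ gives
\begin{equation*}
I(X,q;\mathbf{h})=\bigl(-q\min_i h_i,\;X-q\max_i h_i\bigr],
\end{equation*}
from which $|I(X,q;\mathbf{h})|=(X-q\,\operatorname{diam}(\mathbf{h}))_+$, where $\operatorname{diam}(\mathbf{h}):=\max_i h_i-\min_i h_i$ and $(\,\cdot\,)_+$ denotes positive part.

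Next, starting from
\begin{equation*}
qB_j(X/q;q)=q\int_{X/q-1}^{X/q}\sum_{0\le h_1,\ldots,h_j<u}A_q(\mathbf{h})\,du,
\end{equation*}
I would perform the substitution $h_i=h_j+f_i$ for $i=1,\ldots,j-1$ in the inner sum. By translation invariance $A_q(\mathbf{h})=A_q((\mathbf{f},0))$, and swapping the order of summation gives
\begin{equation*}
\sum_{0\le h_1,\ldots,h_j<u}A_q(\mathbf{h})=\sum_{\mathbf{f}\in\mathbb{Z}^{j-1}}A_q((\mathbf{f},0))\,N_u(\mathbf{f}),
\end{equation*}
where $N_u(\mathbf{f})$ counts integers $h_j$ with $0\le h_j<u$ and $-f_i\le h_j<u-f_i$ for every $i=1,\ldots,j-1$. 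A brief inspection of these constraints shows that they confine $h_j$ to an interval with integer left endpoint and length $u-\operatorname{diam}((\mathbf{f},0))$; hence $N_u(\mathbf{f})=\lceil u-\operatorname{diam}((\mathbf{f},0))\rceil$ when $u>\operatorname{diam}((\mathbf{f},0))$ and $N_u(\mathbf{f})=0$ otherwise.

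It then suffices, for each $\mathbf{f}$, to evaluate $q\int_{X/q-1}^{X/q}N_u(\mathbf{f})\,du$. Setting $D=\operatorname{diam}((\mathbf{f},0))$ and $y=X/q-D$, the computation reduces to the elementary identity
\begin{equation*}
\int_{y-1}^{y}\max\bigl(0,\lceil w\rceil\bigr)\,dw=\max(0,y),
\end{equation*}
which one checks by splitting $[y-1,y]$ at the unique integer it contains when $y>0$ (and trivially when $y\le 0$). Multiplying by $q$ recovers $(X-qD)_+=|I(X,q;(\mathbf{f},0))|$, and summing over $\mathbf{f}$ gives the lemma. The only genuine step is this integration identity; it is precisely this averaging that makes the definition of $B_j$ through an integral over $[H-1,H]$ convert the integer count $N_u(\mathbf{f})$ into the continuous length of $I(X,q;(\mathbf{f},0))$.
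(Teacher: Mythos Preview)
Your argument is correct. The substitution $h_i=h_j+f_i$ and the translation invariance of $A_q$ are exactly the ingredients the paper uses, but the two proofs run in opposite directions and organise the analysis differently. The paper starts from the left-hand side, writes $|I(X,q;(\mathbf{f},0))|$ as $\int_{\mathbb{R}}\prod_i\chi_{(-h_iq,X-h_iq]}(v)\,dv$, then slices $\mathbb{R}$ into unit intervals via $v=q(u+f)$ with $u\in(0,1]$, $f\in\mathbb{Z}$; summing over $\mathbf{f}$ and relabelling $h_i=f+f_i$, $h_j=f$ produces $q\int_0^1\sum_{-u<h_1,\ldots,h_j\le X/q-u}A_q(\mathbf{h})\,du$, which is $qB_j(X/q;q)$ after a shift of the integration variable. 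You instead start from $qB_j(X/q;q)$, compute the fibre count $N_u(\mathbf{f})=\max(0,\lceil u-\operatorname{diam}(\mathbf{f},0)\rceil)$ directly, and close with the identity $\int_{y-1}^{y}\max(0,\lceil w\rceil)\,dw=\max(0,y)$. Your route is a bit more concrete (the diameter formula for $|I|$ and the ceiling identity make the mechanism transparent), while the paper's characteristic-function manipulation avoids any case analysis; both are equally short.
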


\begin{proof}
 
We first notice that

\begin{align}
\left|I(X,q;(\mathbf{f},0))\right|&=\int_{-\infty}^{+\infty}\chi_{(0,X]}(v)\prod_{i=1}^{j-1}\chi_{(-f_iq,X-f_iq]}(v) dv\notag\\
&= q\sum_{f=-\infty}^{\infty}\int_0^1\chi_{(0,X]}(qu+qf)\prod_{i=1}^{j-1}\chi_{\left(-f_iq,X-f_iq\right]}(qu+qf)\,du\notag\\
&= q\sum_{f=-\infty}^{\infty}\int_0^1\chi_{(-f,\frac{X}{q}-f]}(u)\prod_{i=1}^{j-1}\chi_{\left(-f-f_i,\frac{X}{q}-f-f_i\right]}(u)\,du,
\end{align}
where for each measurable set $A\subset \mathbb{R}$, $\chi_A$ denotes its characteristic function and in the second line we made the change of variables $v=qu+qf$, where $u\in (0,1]$ and $f\in \mathbb{Z}$. Summing over $f_1,\ldots,f_{j-1}$ and making the change of variables

$$
\begin{cases}
 h_i=f+f_i,1\leq i\leq j-1,\\
 h_j=f,
\end{cases}
$$
we obtain, since $A_q((\mathbf{f},0))=A_q(\mathbf{h})$ (recall \eqref{Aq(h)}),

\begin{align*}
\sum_{\mathbf{f}\in\mathbb{Z}^{j-1}}A_q((\mathbf{f},0))\left|I(X,q;(\mathbf{f},0))\right|&=q\sum_{\mathbf{h}\in\mathbb{Z}^{j}}A_q(\mathbf{h})\int_0^1 \prod_{i=1}^j\chi_{(-h_i,\frac{X}{q}-h_i]}(u)\,du\\
&=q\sum_{\mathbf{h}\in\mathbb{Z}^{j}}A_q(\mathbf{h})\int_0^1 \prod_{i=1}^j\chi_{(-u,\frac{X}{q}-u]}(h_i)\,du.
\end{align*}
Finally, by interchanging the order of summation and integration, we see that

$$
\sum_{\mathbf{f}\in\mathbb{Z}^{j-1}}A_q((\mathbf{f},0))\left|I(X,q;(\mathbf{f},0))\right|=q\int_0^1 \underset{-u<h_1,\ldots,h_j\leq H-u}{\sum\ldots \sum}A_q(\mathbf{h})du,
$$
which concludes the proof of Lemma \ref{lastlemma}.

\end{proof}

Now, Lemma \ref{lastlemma} when applied in \eqref{b4lastlem} gives the equality

\begin{equation}\label{FIN}
 \mathcal{S}_j(X;q)=qB_j\left(\frac{X}{q};q\right)+O_{\eps,\ell}\left(X^{\frac{2}{k+1}+\eps}\left(\frac{X}{q}\right)^{j-1}\right).
\end{equation}
Thus, by \eqref{FIN} in \eqref{develop-M3-Hq} one obtains (recall \eqref{def-frakC-Hq})

\begin{equation}\label{last-M3-Hq}
\mathcal{M}_{\ell}(X;q)=qC_{\ell}\left(\frac{X}{q};q\right)+O_{\eps,\ell}\left(X^{\frac{2}{k+1}+\eps}\left(\frac{X}{q}\right)^{\ell-1}\right).
\end{equation}
Theorem \ref{thm-M3-Hq} now follows from Theorem \ref{frakC3-me-Hq} and equation \eqref{last-M3-Hq} above.\\

\section*{Acknowledgements}

A first version of this result is part of the author's thesis. It is a pleasure to thank \'Etienne Fouvry for his guidance during that period.

\end{document}